\newcommand{\R}{\ensuremath{\mathbb{R}}}
\newcommand{\K}{\ensuremath{\mathbb{K}}}
\newcommand{\Z}{\ensuremath{\mathbb{Z}}}
\newcommand{\FP}{\ensuremath{\mathcal{F}}}
\newcommand{\BS}{\ensuremath{BS}}
\newcommand{\FV}{\ensuremath{FV}}
\DeclareMathOperator{\supp}{supp}
\DeclareMathOperator{\vol}{vol}
\DeclareMathOperator{\FS}{FSize}
\DeclareMathOperator{\size}{size}
\newtheorem{thm}{Theorem}
\newtheorem{dfn}{Definition}
\newtheorem{lemma}[thm]{Lemma}
\newtheorem{cor}[thm]{Corollary}
\title{Homological and homotopical higher-order filling functions}
\author{Robert Young}
\address{Institut des Hautes \'Etudes Scientifiques,\\
Le Bois Marie, 35 route de Chartres, F-91440 Bures-sur-Yvette, France}
\email{rjyoung@ihes.fr}
\date{\today}
\begin{document}
\begin{abstract}
  We construct groups in which $\FV^3(n)\nsim \delta^2(n)$.  This
  construction also leads to groups $G_k, k\ge 3$ for which
  $\delta^{k}(n)$ is not subrecursive.
\end{abstract}
\bibliographystyle{hamsplain} \maketitle The Dehn function of a group
provides a measure of the complexity of the group's word problem by measuring the
difficulty of filling loops in a corresponding complex.  A natural
generalization is to consider the difficulty of filling
higher-dimensional manifolds or cycles, and there are several ways to
do so, varying in the nature of the filling and the boundary.  One can
consider, for example, the volume necessary to fill a $k$-sphere with
a ball ($\delta^k$), to fill $\partial M$ with $M$ ($\delta^M$), or to
fill a $(k-1)$-cycle by a $k$-chain ($\FV^k$).  In some cases, these
functions are equivalent; for example, the methods used in
\cite{YoungFING} work for all these definitions.  Along these lines,
Brady et al.\ \cite{BradyEtAl} showed that if $\partial M$ is connected and $\dim
M=k+1\ge 4$ then $\delta^M(n)\preceq \delta^k(n)$.  In this note, we
will show that this is not necessarily true if $\dim M=3$, and that
there are groups where $\FV^3$ is not equivalent to $\delta^2$.  We
will also show that for $k\ge 2$ there are groups where $\FV^k$ is not
subrecursive (i.e., $\FV^k$ grows faster than any computable function) and for
$k\ge 3$, there are groups where $\delta^{k}$ is not subrecursive.

We start by defining some filling functions.
To define $\delta^{k}$, we will take the approach of Brady et al.\ 
\cite{BradyEtAl}, which is equivalent to the definition of Alonso,
Wang, and Pride \cite{AlWaPrHi} or of Bridson \cite{BriHigher}.  We
recall their definition of an admissible map:
\begin{dfn}[Admissible maps \cite{BradyEtAl}]
  Let $W$ be a compact $k$-manifold and $X$ a CW-complex.  An {\em
    admissible map} from $W$ to $X$ is a map $f:W\to X^{(k)}$ such
  that $f^{-1}(X^{(k)}\setminus X^{(k-1)})$ is a disjoint union of
  open $k$-dimensional balls in $W$, each mapped homeomorphically to a
  $k$-cell of $X$.  We define the volume $\vol(f)$ of $f$ as the
  number of these balls.
\end{dfn}
If $\alpha=\sum a_i \Delta_i\in C_k(X;\R)$ is a cellular chain in $X$,
with $a_i\in \R$ and $\Delta_i$ distinct cells of $X$, define
$\|\alpha\|_1=\sum|a_i|$.  If $W$ is orientable and $f:W\to X$ is an
admissible map, the image of the fundamental class of $W$ is a
cellular $k$-chain, which we call $\hat{f}$.  This has integer
coefficients, and $\|\hat{f}\|_1\le \vol{f}.$ Furthermore, if $W$ is
closed, then $\hat{f}$ is a cycle.  

If $X$ is a $k$-connected CW-complex, one can define the filling
volume of an admissible map $\alpha:S^k\to X$ as
$$\delta_X^k(\alpha)=\inf \{\vol \beta\mid \beta:D^{k+1}\to X, \beta|_{S^k}=\alpha, \beta \text{ is admissible}\}$$
and the $k$-th order Dehn function of the complex by
$$\delta^k_X(n)=\mathop{\sup_{\alpha:S^k\to X}}_{\vol{\alpha}\le n} \delta_X^k(\alpha),$$
where $\alpha$ is assumed to be admissible.

We can also define the Dehn function of a group:
\begin{dfn}[Dehn functions]\label{DehnFnDef}
  We say a group $G$ is $\FP^k$ if there is a $K(G,1)$ with finite
  $k$-skeleton.  If $G$ is $\FP^{k+1}$, let $X$ be the
  $(k+1)$-skeleton of the universal cover of such a $K(G,1)$ and
  define the {\em $k$-th order Dehn function} of $G$
  $$\delta^k_G(n)=\delta^k_{X}(n).$$
\end{dfn}
This function depends on the choice of $X$, but Gromov's Filling
Theorem \cite{BriInvit} states that the growth rate of $\delta^1_X$ is
an invariant of $G$, and Alonso, Wang, and Pride generalized this to higher dimensions \cite{AlWaPrHi}.  That is, we define the partial ordering
\begin{equation}
  \label{DehnEquivalenceRelation}
  f\preceq g\text{ iff $\exists A,B,C,D,E$ s.t.\ $f(n)\le A 
    g(B n + C)+Dn +E$ for all $n>0$}
\end{equation}
and let $f\sim g$ if and only if $f\preceq g$ and $f\succeq g$.  If
$X_1$ and $X_2$ are as in Definition \ref{DehnFnDef}, then $\delta^k_{X_1}\sim
\delta^k_{X_2}$.

There are several ways to generalize this beyond fillings of spheres
by balls.  Brady et al.~\cite{BradyEtAl} provide one generalization.
If $(M,\partial M)$ is a (smooth or PL) compact manifold pair with
$\dim{M}=k+1$, define the filling volume of an admissible map
$\alpha:\partial M\to X$ as
$$\delta_X^{M}(\alpha)=\inf \{\vol(\beta)\mid \beta:M\to X, \beta|_{\partial M}=\alpha, \beta \text{ is admissible}\}.$$
and
$$\delta_X^{M}(n)=\mathop{\sup_{\alpha:\partial M\to X}}_{\vol{\alpha}\le n} \delta_X^M(\alpha),$$
where $\alpha$ is again assumed to be admissible.  In particular, $\delta_X^{D^{k+1}}=\delta_X^{k}$.

Another generalization is to consider fillings of chains by cycles,
with volume given by $\|\cdot\|_1$-norm.  Gromov \cite{GroAII} defined
the filling volume function $\FV$ of a manifold by using Lipschitz
cycles; we will use cellular cycles.  For $\alpha \in Z_{k-1}(X;\Z)$ a
$(k-1)$-cycle, define
$$\FV_{X,\K}^{k}(\alpha)=\inf \{\|\beta\|_1\mid \beta \in C_{k}(X;\K), \partial \beta=\alpha\}$$
for $\K=\R$ or $\K=\Z$, and define the $k$-dimensional filling volume
function of $X$ by
$$\FV_{X,\K}^{k}(n)=\mathop{\sup_{\alpha\in Z_{k-1}(X;\Z)}}_{\|\alpha\|_1\le n} \FV_{X,\K}^{k}(\alpha).$$

As with $\delta^M$, we can specify the manifold to fill.  If $N$ is a
closed orientable $(k-1)$-dimensional manifold, we define
$$\FV_{X,\K}^N(n)=\mathop{\sup_{\alpha:N\to X}}_{\vol{\alpha}\le n} \FV_{X,\K}^{k}(\hat{\alpha}),$$
where $\alpha$ is assumed to be admissible.
Then if $(M,\partial M)$ is a compact orientable manifold
pair with $\dim(M)=k$,
$$\FV_{X,\Z}^{\partial M}(n)\preceq \delta_X^M(n)$$
$$\FV_{X,\K}^{\partial M}(n)\preceq \FV_{X,\K}^{k}(n).$$

Finally, we define the filling size of a curve.  The size of a chain
is a notion from geometric measure theory which counts the number of
distinct cells in the support of a chain.  The filling size $\FS$
describes the infimal size of a chain filling a curve.  If
$\beta=\sum_{i=1}^r b_i \Delta_i\in C_k(X;\K)$ for $b_i\ne 0\in \K$
and distinct $k$-cells $\Delta_i$ of $X$, let $\size{\beta}=r$.  Let
the support $\supp{\beta}$ of $\beta$ be the minimal subcomplex of $X$
containing the $\Delta_i$.  Define the {\em filling size}
$\FS(\alpha)$ of an admissible loop $\alpha:S^1\to X$ by
$$\FS_X(\alpha)=\min \{\size{\beta} \mid \beta \in C_{2}(X;\R), \partial \beta=\hat{\alpha}\}$$
$$\FS_X(n)=\mathop{\sup_{\alpha:S^1\to X}}_{\vol{\alpha}\le n} \FS_X(\alpha).$$
This represents the number of different $2$-cells of $X$
necessary to support a filling of a loop.  

Like $\delta$, the functions $\FS$ and $\FV$ are defined in terms of a
CW-complex $X$, but can also be defined up to
\eqref{DehnEquivalenceRelation} for a group
\begin{lemma}
  Let $k\ge 1$ and let $X_1$ and $X_2$, be $k$-connected CW-complexes
  such that $G$ acts on $X_i$ cocompactly, properly discontinuously,
  and by automorphisms.  Let $\K=\R$ or $\K=\Z$.  Then $\FS_{X_1}\sim
  \FS_{X_2}$ and $\FV^{k+1}_{X_1,\K}\sim \FV^{k+1}_{X_2,\K}$.
\end{lemma}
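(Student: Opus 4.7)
The plan is to follow the standard Alonso-type comparison argument: build $G$-equivariant cellular maps $\phi:X_1^{(k+1)}\to X_2$ and $\psi:X_2^{(k+1)}\to X_1$ together with a $G$-equivariant chain homotopy between $\psi_*\phi_*$ and the identity, and then use these to transport fillings between the two complexes with only a bounded loss.

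First I would construct $\phi$ and $\psi$ by the usual equivariant skeletal induction. By cocompactness, each $X_i$ has finitely many $G$-orbits of cells in dimensions $\le k+1$; picking one representative per orbit in the target and extending $G$-equivariantly, one defines $\phi$ on vertices arbitrarily, extends over $1$-cells using connectedness of $X_2$, and continues up through the $(k+1)$-skeleton using the $i$-connectedness of $X_2$ for $i\le k$. An identical induction gives $\psi$, and a parallel induction (using $k$-connectedness of $X_1$) produces a $G$-equivariant chain homotopy $H_1:C_i(X_1)\to C_{i+1}(X_1)$ with $\partial H_1+H_1\partial=\id-\psi_*\phi_*$ for $i\le k$, and symmetrically $H_2$ on $X_2$. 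Because each map is $G$-equivariant and there are finitely many orbits, there is a uniform constant $C$ such that for any cell $\Delta$ of $X_1$, both $\|\phi_*\Delta\|_1$ and $\size(\phi_*\Delta)$ are at most $C$, and likewise for $\psi_*$, $H_1$, $H_2$.

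For $\FV^{k+1}$, given a $k$-cycle $\alpha\in Z_k(X_1;\K)$ with $\|\alpha\|_1\le n$, push it forward: $\phi_*\alpha\in Z_k(X_2;\K)$ is a cycle (since $\phi$ is cellular) with $\|\phi_*\alpha\|_1\le Cn$. Take $\beta\in C_{k+1}(X_2;\K)$ with $\partial\beta=\phi_*\alpha$ and $\|\beta\|_1$ nearly minimal. Then $\psi_*\beta+H_1(\alpha)\in C_{k+1}(X_1;\K)$ has boundary $\psi_*\phi_*\alpha+(\alpha-\psi_*\phi_*\alpha)=\alpha$, with $\ell^1$-norm at most $C\|\beta\|_1+Cn$. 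Taking suprema and applying \eqref{DehnEquivalenceRelation} gives $\FV^{k+1}_{X_1,\K}\preceq \FV^{k+1}_{X_2,\K}$; the reverse inequality is symmetric. The $\FS$ argument is parallel: given an admissible loop $\alpha:S^1\to X_1$ of volume $\le n$, perturb $\phi\circ\alpha$ to an admissible loop in $X_2$ of volume $\le Cn$, find a real $2$-chain $\gamma$ in $X_2$ filling it with nearly minimal size, and form $\psi_*\gamma+H_1(\hat\alpha)$ in $X_1$. This is a filling of $\hat\alpha$, and because $\size(\cdot)$ is subadditive and each $\phi_*$, $\psi_*$, $H_1$ produces chains of bounded size per input cell, its size is at most $C\size(\gamma)+Cn$, yielding $\FS_{X_1}\preceq \FS_{X_2}$.

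The only nontrivial point is the management of the size functional in the $\FS$ case: unlike $\|\cdot\|_1$, the size of a sum can fail to decrease under cancellation, so one must take care that $\psi_*\gamma$ and $H_1(\hat\alpha)$ each carry boundedly many cells per generator. This is ensured by cocompactness, since the finitely many orbit representatives for $\psi_*$ and $H_1$ have supports of a priori bounded size. Constructing the equivariant chain homotopy $H_1$ is the main technical step, but it is a routine skeletal induction using the $k$-connectedness hypothesis together with the vanishing of equivariant obstructions on free $\Z G$-modules, so no real difficulty arises.
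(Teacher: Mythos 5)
Your transfer computation---push the cycle forward, fill it in $X_2$, pull the filling back by the reverse map, and correct with the homotopy term $H_1(\alpha)$, with all constants coming from cocompactness and the subadditivity of $\size$---is exactly the argument in the paper; the only difference is how the comparison maps are produced. The paper gets them from the \v{S}varc--Milnor Lemma together with Lemmas 12 and 13 of \cite{AlWaPrHi}, which supply cellular quasi-isometries $f,g$ and a cellular homotopy between $g\circ f$ and the identity on the $k$-skeleton, with uniform bounds on $\|\cdot\|_1$ and $\size$. You instead propose to build \emph{$G$-equivariant} maps and an equivariant chain homotopy by skeletal induction, and this is where there is a genuine gap: the hypotheses do not give a free action. ``Properly discontinuously'' permits finite cell stabilizers, and then your construction can fail at the very first step: an equivariant map $\phi:X_1\to X_2$ must send a point whose stabilizer is a finite subgroup $H\le G$ to an $H$-fixed point of $X_2$, which need not exist. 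For instance, $G=\Z/2$ acting trivially on $X_1=\{\mathrm{pt}\}$ and antipodally on $X_2=S^{k+1}$ (two cells in each dimension) satisfies all the hypotheses of the lemma, yet admits no equivariant map $X_1\to X_2$. For the same reason the chain groups $C_i(X_j;\Z)$ are then not free $\Z G$-modules, so the ``vanishing of equivariant obstructions on free $\Z G$-modules'' that you invoke to produce $H_1$ is not available.

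As written, then, your proof covers only free actions. That does suffice for every application of the lemma in this paper (the complexes used are skeleta of universal covers of $K(G,1)$'s and their products), but to prove the statement as given you should drop equivariance altogether: all the argument needs is cellular maps $f:X_1^{(k+1)}\to X_2^{(k+1)}$ and $g:X_2^{(k+1)}\to X_1^{(k+1)}$ and a (chain) homotopy between $g_*f_*$ and $\id$ that move points a bounded distance and carry each cell to a chain of uniformly bounded norm and size. These exist because both complexes are quasi-isometric to $G$ by \v{S}varc--Milnor, which requires only cocompactness and proper discontinuity---this is precisely the route taken in the paper via \cite{AlWaPrHi}. With that substitution (or with the extra hypothesis that the actions are free), the rest of your argument, including the handling of $\size$ in the $\FS$ case, goes through essentially verbatim.
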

\begin{proof}
  It is enough to show that $\FS_{X_1}\precsim \FS_{X_2}$ and
  $\FV^{k+1}_{X_1,\K}\precsim \FV^{k+1}_{X_2,\K}$; the lemma then
  follows by symmetry.

  By the \v{S}varc-Milnor Lemma, $X_1$ and $X_2$ are both
  quasi-isometric to $G$, and thus are quasi-isometric.  By Lemmas 12
  and 13 of \cite{AlWaPrHi}, there are cellular quasi-isometries
  $f:X_1^{(k+1)}\to X_2^{(k+1)}$ and $g:X_2^{(k+1)}\to X_1^{(k+1)}$
  and a cellular homotopy $h:X_1^{(k)}\times [0,1]\to X_1^{(k+1)}$
  such that for all $x\in X_1^{(k)}$, $h(x,0)=(g\circ f)(x)$
  and $h(x,1)=x$.  Furthermore, there is a $c>0$ such that if
  $$f_*:C_*(X_1^{(k+1)};\K)\to C_*(X_2^{(k+1)};\K)$$ is the map induced by $f$,
  then  for all $\sigma\in C_i(X_1^{(k+1)};\K)$, we have
  $$\|f_*(\sigma)\|_1\le c \|\sigma\|_1,$$
  $$\size(f_*(\sigma)) \le c \size(\sigma).$$
  Similar inequalities hold
  when $f$ is replaced by $g$ or $h$.  
  
  If $\alpha\in Z_k(X_1;\K)$, then $f_*(\alpha)$ is a cycle in $X_2$,
  and there is a $(k+1)$-chain $\beta\in C_{k+1}(X_2;\K)$ such that
  $\partial\beta=f_*(\alpha)$ and
  $\|\beta\|_1\le \FV^{k+1}_{X_2,\K}(f_*(\alpha))+1$.  Then
  $$\beta'=g_*(\beta)+h_*(\alpha\times [0,1])$$
  has boundary $\alpha$ and 
  \begin{align*}
    \FV^{k+1}_{X_1,\K}(\alpha)&\le \|\beta'\|_1\\
    &\le c \FV^{k+1}_{X_2,\K}(f_*(\alpha))+c+ c\|\alpha\|_1 \\
    &\le c \FV^{k+1}_{X_2,\K}(c\|\alpha\|_1)+c+ c\|\alpha\|_1 
  \end{align*}
  Thus $\FV^{k+1}_{X_1,\K}\precsim \FV^{k+1}_{X_2,\K}$.  Similarly, choosing
  $\beta$ so that $\size(\beta)=\FS_{X_2}(f_*(\alpha))$ shows that
  $\FS_{X_1,\K}\precsim \FS_{X_2,\K}$.
\end{proof}
If $G$ acts on $X$ in this way, we can define $\FS_G=\FS_X$ and
$\FV^*_{G,\K}=\FV^*_{X,\K}$, and these functions are well-defined up
to the equivalence relation \eqref{DehnEquivalenceRelation}.  For all
of these functions, we will omit the group when there is no confusion.

We will use $\FS$ to provide a lower bound on some
higher-dimensional filling volumes.
\begin{thm} \label{thm:mainthm}
  If a group $G$ is $\FP^{k+1}$, then
  $$\FV^{(S^1)^{k}}_{G^k,\Z}(n)\succeq \FS_G(n^{1/k})$$
\end{thm}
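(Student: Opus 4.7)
The plan is to exploit the product CW structure on $X^k$, where $X$ is the finite $(k+1)$-skeleton of the universal cover of a $K(G,1)$. Given an admissible loop $\alpha:S^1\to X$ of volume $m$, the product map $\alpha^k:(S^1)^k\to X^k$ is admissible of volume $m^k$, and under the cellular Künneth isomorphism $C_*(X^k)\cong C_*(X)^{\otimes k}$ its image cycle equals $\hat\alpha^{\otimes k}$. The goal is to show that any $\beta\in C_{k+1}(X^k;\Z)$ with $\partial\beta=\hat\alpha^{\otimes k}$ yields a 2-chain $\gamma\in C_2(X;\R)$ with $\partial\gamma=\hat\alpha$ and $\size(\gamma)$ controlled by $\|\beta\|_1$; then $\FS_X(\alpha)\le\|\beta\|_1$, minimizing over $\beta$, taking a sup over admissible $\alpha$ with $\vol\alpha\le m$, and setting $n=m^k$ (with monotonicity interpolating for general $n$) gives the theorem.

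The extraction of $\gamma$ uses a slicing operator. Decompose $\beta=\sum_{\vec d}\beta_{\vec d}$ by multidegree, with $\beta_{\vec d}\in C_{d_1}(X)\otimes\cdots\otimes C_{d_k}(X)$ and $\sum d_i=k+1$. Choose 1-cells $g_2,\ldots,g_k\in\supp\hat\alpha$ with interior points $p_i\in g_i$, and define $S_{g_2,\ldots,g_k}:C_{k+1}(X^k)\to C_2(X)$ to vanish on $\beta_{\vec d}$ unless $\vec d=(2,1,\ldots,1)$, and on that component, writing $\beta_{(2,1,\ldots,1)}=\sum a_{e,f_2,\ldots,f_k}\,e\otimes f_2\otimes\cdots\otimes f_k$, to output $S(\beta)=\sum_e a_{e,g_2,\ldots,g_k}\,e$. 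Geometrically $S$ is transverse intersection with the codimension-$(k-1)$ subspace $X\times\{p_2\}\times\cdots\times\{p_k\}$; because each $p_i$ is interior to a 1-cell (hence lies in neither vertices nor 2-cell interiors), no other multidegree meets the slice.

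The central computation identifies $\partial S(\beta)$ up to a controllable correction. Direct evaluation on the all-1-dimensional tensor gives $S(\partial\beta)=\bigl(\prod_{i=2}^k[\hat\alpha]_{g_i}\bigr)\hat\alpha$, which is a nonzero multiple of $\hat\alpha$ by the choice $g_i\in\supp\hat\alpha$. Applying Leibniz to $\beta_{\vec d}$, one finds that $\partial S(\beta)$ differs from $S(\partial\beta)$ by a sum of correction terms $\eta^{(j)}$ coming from boundaries on the multidegrees $\vec d=(1,\ldots,2_j,\ldots,1)$ for $j\ge 2$. Each $\eta^{(j)}$ is a 1-cycle in $X$ (verified using the $(0,1,\ldots,1)$-component of $\partial\beta=\hat\alpha^{\otimes k}$), hence bounds a 2-chain since $X$ is contractible. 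Absorbing these correction 2-chains into $S(\beta)$ and then dividing by the nonzero scalar $\prod_{i=2}^k[\hat\alpha]_{g_i}$ (permitted because $\FS$ uses $\R$-coefficients) produces a 2-chain $\gamma$ with $\partial\gamma=\hat\alpha$.

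Finally, pigeonhole bounds the size. Letting $m'\le m$ denote the number of edges in $\supp\hat\alpha$, the sum of $\size(S_{g_2,\ldots,g_k}(\beta))$ over the $m'^{k-1}$ tuples in $(\supp\hat\alpha)^{k-1}$ is at most $\size(\beta_{(2,1,\ldots,1)})\le\|\beta\|_1$, so some tuple gives $\size(S(\beta))\le\|\beta\|_1/m'^{k-1}$. Combined with the size bounds on the correction fillings of the $\eta^{(j)}$, this gives $\FS_X(\alpha)\le\|\beta\|_1$ up to bounded factors, yielding the theorem. The main obstacle is the careful treatment of the correction 1-cycles: while each bounds a 2-chain by contractibility of $X$, showing that these correction fillings have sizes dominated by the corresponding components of $\|\beta\|_1$ is where the technical subtlety of the slicing lies.
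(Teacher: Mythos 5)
There is a genuine gap, and it sits exactly where you flag it: the correction cycles. Your slicing operator, the Leibniz computation, and the pigeonhole bound on $\size(S(\beta))$ are all fine (modulo minor points: only the total correction $c\hat\gamma-\partial S(\beta)$ is automatically a cycle, not each $\eta^{(j)}$ separately, and your $[\hat\alpha]_{g_i}$ should be coefficients of $\hat\gamma$). But the argument then needs 2-chains filling the corrections whose \emph{size} is bounded, up to constants, by $\|\beta\|_1$, and contractibility of $X$ gives no such bound. This is not a technical subtlety to be absorbed later; it is the whole difficulty. The $\eta^{(j)}$ are essentially arbitrary 1-cycles of length $O(\|\beta\|_1)$ in $X$, and in precisely the groups this theorem is aimed at (e.g.\ the Collins--Miller group of Lemma \ref{lem:nonRecFS}, with unsolvable word problem), a 1-cycle of length $L$ can require fillings of size non-recursively large in $L$. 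So bounding the correction fillings by $\|\beta\|_1$ is at least as hard as the inequality you are trying to prove, and nothing in your setup (slices supported in $p_1(\supp\beta)$, say) forces the corrections to bound cheaply: the corrections do lie in $p_1(\supp\beta)$, but there is no reason they, or $\hat\gamma$ itself, bound in that particular projection.

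The paper's proof avoids constructing any explicit filling of $\hat\gamma$. It argues by contradiction with cup products: if $\hat\gamma$ were not a boundary (over $\R$) in \emph{any} of the projected supports $p_i(\supp\beta)$, the universal coefficient theorem gives classes $v_i\in H^1(p_i(\supp\beta);\R)$ with $v_i(\hat\gamma)=1$, and the cup product of the pullbacks $w_i=p_i^*(v_i)$ evaluates nonzero on $\hat\alpha$ (it pulls back to a generator of $H^k((S^1)^k;\R)$), contradicting $\hat\alpha=\partial\beta$. Hence for \emph{some} $i$ the subcomplex $p_i(\supp\beta)$ supports a filling of $\hat\gamma$, so it has at least $\FS_X(\gamma)$ two-cells, and since the number of cells of $\supp\beta$ dominates this up to a uniform constant, $\|\beta\|_1\ge\size\beta\ge c^{-1}\FS_X(\gamma)$. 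Note two features your route misses: the definition of $\FS$ only requires the existence of a filling supported in a small subcomplex, not an explicit small chain; and the factor $i$ in which $\hat\gamma$ bounds is not under your control, whereas your slice is tied to the first factor. If you want to salvage a slicing-style argument you would need an idea playing the role of the cup product step; as written, the proposal does not prove the theorem.
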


\begin{proof}
  Let $X$ be the $(k+1)$-skeleton of the universal cover of a $K(G,1)$
  with finite $(k+1)$-skeleton, so that $X$ is $k$-connected and $G$
  acts cocompactly, properly discontinuously, and by automorphisms on
  $X$.  Let $\gamma:S^1\to X$ be an admissible map such that
  $\vol{\gamma}=n$ and $\FS_X(\gamma)=\FS_X(n)$.  Define the map
  $\alpha=\gamma^{k}:(S^1)^{k} \to X^{k}$, where $X^k$ is given the
  product CW-structure; then $\vol(\alpha)=n^{k}$.  Let $\beta\in
  C_{k+1}(X^{k};\Z)$ be a chain whose boundary is $\hat{\alpha}$ and
  such that
  $$\|\beta\|_1=\FV^{(S^1)^{k}}_{X^k,\Z}(\alpha).$$
  Let $p_1,\dots,p_{k}:X^{k}\to X$ be the maps projecting to each
  factor.  Then $p_i(\supp(\beta))$ is a subcomplex of $X$ for each
  $i$.  We claim that for some $i$, this subcomplex supports a 2-chain
  filling $\hat{\gamma}$ and thus has at least $\FS_X(\gamma)$ cells.

  We proceed by contradiction, assuming that $\hat{\gamma}$ is not a
  boundary (over $\R$) in any of the $p_i(\supp{\beta})$'s.  In this
  case, $\hat{\gamma}$ represents a non-zero element of
  $H_1(p_i(\supp{\beta});\R)$ and by the universal coefficient
  theorem, there is a cohomology class $v_i\in
  H^1(p_i(\supp{\beta});\R)$ such that $v_i(\hat{\gamma})= 1$.  Let
  $w_i=p_i^*(v_i)\in H^1(\supp{\beta};\R)$.  We claim that
  $$\biggl[\bigcup_{i=1}^k w_i\biggr](\hat{\alpha})\ne 0;$$
  this contradicts the fact that $\partial \beta=\hat{\alpha}$.
  
  The $w_i$ pull back under $\alpha$ to the standard generators of
  $H^1((S^1)^k;\R)$, so their cup product is a class in $H^{k}(\supp
  \beta;\R)$ which pulls back to a generator $t$ of $H^k((S^1)^k;\R)$.
  Therefore,
  $$\biggl[\bigcup_{i=1}^k w_i\biggr](\hat{\alpha})$$
  is equal to $t$
  evaluated on the fundamental class of $(S^1)^k$, and is thus
  non-zero.  Since $\hat\alpha$ is a boundary in $\supp \beta$, this
  is impossible; any class in $H^{k}(\supp \beta;\R)$ must evaluate to
  0 on $\hat{\alpha}$.  Thus $\gamma$ is a boundary in
  $p_i(\supp{\beta})$ for some $i$.

  This implies that $p_i(\supp{\beta})$ contains at least
  $\FS_X(\gamma)$ 2-cells.  Each of these is the image of a cell of
  $\supp{\beta}$, so $\supp{\beta}$ contains at least $\FS_X(\gamma)$
  cells.  By the definition of a CW-complex, any $(k+1)$-cell of
  $X^{k}$ is contained in a finite subcomplex of $X^{k}$.  Since there
  are only finitely many equivalence classes of cells of $X^{k}$ under
  the action of $G^k$, there is a constant $c>0$ such that for all
  $\sigma\in C^*(X^{k};\R)$, the number of cells in $\supp{\sigma}$ is
  at most $c\size{\sigma}$.  Thus
  $$\FV^{(S^1)^{k}}_{X^k,\Z}(\alpha)= \|\beta\|_1\ge \size{\beta}\ge c^{-1}\FS_X(\gamma)$$
  so
  $$\FV^{(S^1)^{k}}_{X^k,\Z}(n^k)\ge c^{-1}\FS_X(n)$$
  as desired.
\end{proof}

\begin{lemma}\label{lem:nonRecFS}
  There is an aspherical group $G$ for which $\FS(n)$ is not
  subrecursive.
\end{lemma}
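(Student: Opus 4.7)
The plan is to exhibit $G$ as a finitely presented aspherical group that simulates a Turing machine with non-subrecursive halting time. Fix a Turing machine $M$ whose halting-time function $T_M(n)$ grows faster than every computable function; such machines exist by diagonalizing against the enumeration of total recursive functions. Using a Sapir--Birget--Rips-style construction, refined to produce aspherical groups, one obtains a finitely presented group $G=G_M$ with a finite aspherical $K(G,1)$, together with a sequence of trivial words $w_n$ of length $O(n)$ whose van Kampen diagrams have area at least $T_M(n)$.

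To convert this Dehn function bound into a bound on $\FS$, I would use the following geometric feature of such constructions: the minimal van Kampen diagram for $w_n$ records the computation of $M$ step by step, and the $2$-cell corresponding to the $i$-th step is attached at the vertex of the universal cover $X$ labeled by the tape-and-state configuration of $M$ at that step. Because $M$ visits $T_M(n)$ distinct configurations during its computation on input $n$, the support of any real filling of $\hat w_n$ must contain at least $T_M(n)$ distinct $2$-cells of $X$, so $\size(\beta)\ge T_M(n)$ for every $\beta\in C_2(X;\R)$ with $\partial\beta=\hat w_n$.

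The main obstacle is rigorously passing from the Dehn function lower bound (number of relator applications counted with multiplicity) to a filling size lower bound (number of \emph{distinct} $2$-cells used). In the aspherical $2$-dimensional setting this becomes cleaner: $X$ is contractible, so $\partial_2\colon C_2(X;\R)\to C_1(X;\R)$ is injective and the real filling of a cycle is unique, reducing the problem to counting distinct cells in that unique filling. The computation-based argument then proceeds by induction on the time step, showing that each new configuration contributes a fresh $2$-cell to the support. Assembling these ingredients yields $\FS_G(n)\succeq T_M(n)$, which is not subrecursive by the choice of $M$.
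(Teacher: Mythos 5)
There is a genuine gap. Your reduction to the uniqueness of the real filling is correct and is a nice observation (since the presentation $2$-complex is aspherical, its universal cover $X$ is a contractible $2$-complex, so $\partial_2\colon C_2(X;\R)\to C_1(X;\R)$ is injective), but it does not do the work you ask of it. The unique real filling of $\hat w_n$ is the pushforward of \emph{any} van Kampen diagram for $w_n$, and its support can be far smaller than the diagram's area: two cells of the diagram mapping to the same $2$-cell of $X$ with opposite orientations cancel in the chain, and even without cancellation many diagram cells can map to the same cell of $X$. Your key claim --- that each step of the computation contributes a \emph{fresh} $2$-cell to the support, so $\size(\beta)\ge T_M(n)$ --- is exactly the hard point, and the ``induction on the time step'' is only asserted. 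Making it work would require (i) an explicit aspherical machine-simulating presentation (asphericity is not a standard feature of Sapir--Birget--Rips-type constructions, so ``refined to produce aspherical groups'' is itself an unproved ingredient), (ii) injectivity of the configuration-to-group-element map, i.e.\ a normal-form argument showing distinct configurations give distinct cells of $X$, and (iii) a non-cancellation argument showing those cells survive with nonzero coefficient in the unique real chain. None of these is supplied, and (iii) in particular is not a formal consequence of a Dehn-function lower bound, since area counts cells with multiplicity while $\FS$ counts distinct cells of the support.

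The paper's route avoids all of this. It takes the Collins--Miller group, which already has a finite aspherical presentation and unsolvable word problem and lies in Gersten's hierarchy; Gersten's Theorem 4.3 then bounds $\delta_G$ by a recursive function of the real homological filling function $\FV^{S^1}_{G,\R}$, so the latter is not subrecursive. The passage from $\FV_{\R}$ to $\FS$ is then pure linear algebra: if a filling of $\hat\alpha$ is supported on $\size(\beta)$ cells, then $\partial\gamma=\hat\alpha$ is a solvable integer linear system of size controlled by $\size(\beta)$ with coefficients bounded by the relator lengths, hence has a solution of $\|\cdot\|_1$-norm recursively bounded in $\size(\beta)$. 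This quantitative step (small support forces recursively small filling norm) is what replaces your unproven ``fresh cell per configuration'' claim; if you want to salvage your approach, you would either need to prove (i)--(iii) for a concrete presentation or reorganize the argument along these lines.
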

\begin{proof}
  Collins and Miller \cite{ColMil} constructed a group $G$ with
  unsolvable word problem and an aspherical presentation.  This group
  is constructed from a free group by applying three successive
  HNN-extensions where the associated subgroups are finitely generated
  free groups.  This group belongs to the hierarchy $\mathcal{F}$
  constructed by Gersten \cite{GerstenHomology}, and Thm.~4.3 of
  \cite{GerstenHomology} states that for groups in this hierarchy,
  $\delta_G$ is bounded by a recursive function of $\FV_{G,\R}^{S^1}$.
  Since $G$ has unsolvable word problem, $\delta_G$ is not
  subrecursive, so $\FV_{G,\R}^{S^1}$ is also not subrecursive.

  We claim that $\FV_{G,\R}^{S^1}(n)$ is bounded by a recursive
  function of $\FS_G(n)$ and thus that $\FS_G(n)$ is not subrecursive.
  Let $X$ be the CW-complex corresponding to a finite aspherical
  presentation of $G$, let $c$ be the total length of the relators in
  the presentation and let $\alpha$ be a loop in $X$.  Let $\beta\in
  C_2(\supp{\beta};\R)$ be a 2-chain such that $\partial\beta=\alpha$
  and such that $\size(\beta)=\FS_X(\alpha)$.  It suffices to show
  that there is a 2-chain $\gamma$ such that $\partial\gamma=\alpha$
  and $\|\gamma\|_1$ is bounded by a recursive function of
  $\size{\beta}$.

  Note that
  \begin{align*}
    \dim C_2(\supp{\beta};\R)& = \size{\beta}\\
    \dim C_1(\supp{\beta};\R)&\le c \size{\beta}
  \end{align*}
  and
  $\partial:C_2(\supp{\beta};\R)\to C_1(\supp{\beta};\R)$ is a linear
  map.  The 2-cells of $\supp{\beta}$ correspond to a basis of
  $C_2(\supp{\beta};\R)$ and the 1-cells correspond to a basis of
  $C_1(\supp{\beta};\R)$.  In these bases, the equation
  $\partial\gamma=\alpha$ corresponds to a system of at most
  $c\size{\beta}$ linear equations in $\size{\beta}$ variables with
  integer coefficients between $-c$ and $c$.  Since $\beta$ is a
  solution, the system is solvable, and since such a system can be
  solved algorithmically, there is a solution whose $\|\cdot\|_1$ norm
  is bounded by a recursive function of $\size{\beta}$.  Thus
  $\FV_{X,\R}^{S^1}(n)$ is bounded by a recursive function of $\FS_X(n)$
  and so $\FS_G(n)$ is not subrecursive.
\end{proof}

Using Theorem \ref{thm:mainthm} and the following theorem of Brady et
al.\ \cite[Rem. 2.6.(4)]{BradyEtAl}, we can give lower bounds on filling functions of products of $G$:
\begin{thm}\label{BradyEtAlThm}
  If $\dim M=k+1\ge 4$, then $\delta^M\le \delta^{k}$ provided
  $\partial M$ is connected or $\delta^{k}$ is superadditive.
\end{thm}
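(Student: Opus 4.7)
The plan is to extend the admissible map $\alpha:\partial M \to X$ to an admissible $\beta:M\to X$ by choosing a relative handle decomposition of $(M,\partial M)$ and extending handle by handle. Write $M=(\partial M\times [0,1])\cup h_1\cup\cdots\cup h_r$, where $h_i$ is a handle of index $j_i\in\{1,\ldots,k+1\}$. First, extend $\alpha$ over the collar $\partial M\times [0,1]$ via the product structure; this yields an admissible map of volume $O(n)$ (with constants depending only on the fixed collar structure of $M$).

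Next, attach handles one at a time. For a $j$-handle $h_i=D^{j_i}\times D^{k+1-j_i}$, the attaching region already carries a map into $X$ of image-volume $O(n)$ coming from the current extension, and extending across $h_i$ reduces to filling a $(j_i{-}1)$-sphere by a $j_i$-disk in $X$. The naive per-handle bound involves the lower-order functions $\delta^{j_i-1}$ for $j_i-1<k$, which need not be dominated by $\delta^{k}$. The hypothesis $\dim M=k+1\ge 4$ is what lets us avoid this: one uses handle-cancellation arguments (Whitney-trick style) together with the connectedness of $\partial M$ to reduce to a decomposition in which the only surviving handles are of index $1$ (which extend freely because $X$ is connected, contributing at most $O(n)$) and a single top-dimensional $(k{+}1)$-handle, whose attaching $S^{k}$ has image volume $O(n)$ and is filled using $\delta^{k}$ at cost $\delta^{k}(O(n))=O(\delta^{k}(n))$. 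Summing gives $\vol\beta\le O(n)+O(\delta^{k}(n))=O(\delta^{k}(n))$, using that $\delta^{k}$ grows at least linearly.

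For disconnected $\partial M=N_1\sqcup\cdots\sqcup N_s$ with $\vol(\alpha|_{N_i})=n_i$, I would fill each component separately by partitioning $M$ (or inducting on the number of boundary components) into pieces that realize individual boundary components of $\partial M$, and apply the connected case to each piece to get a total volume of at most $\sum_i \delta^{k}(n_i)$. Superadditivity of $\delta^{k}$ is then invoked to bound $\sum_i\delta^{k}(n_i)\le \delta^{k}(n)$.

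The main obstacle is the handle-cancellation step, which requires genuine topological input: Whitney-trick cancellations work cleanly in dimension $\ge 5$, so the borderline case $k+1=4$ (i.e.\ $k=3$) probably needs a separate argument, perhaps by thickening lower-dimensional sphere fillings in $X$ into $(k{+}1)$-dimensional ones using that $X$ is $k$-connected. Setting up the quantitative bookkeeping so that each stage's filling cost is genuinely $O(\delta^{k}(n))$, rather than involving $\delta^{j-1}$ for $j-1<k$, is the subtle part in either approach.
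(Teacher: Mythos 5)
This theorem is not proved in the paper at all: it is quoted verbatim from Brady--Bridson--Forester--Shankar \cite[Rem.~2.6(4)]{BradyEtAl}, so there is no internal argument to compare against, and your proposal has to stand on its own. It does not, because its central step is topologically impossible. You cannot, by handle cancellation, reduce an arbitrary compact $(k+1)$-manifold $M$ to a decomposition consisting only of $1$-handles and a single $(k+1)$-handle: cancellation (Whitney-trick or otherwise) only removes algebraically cancelling pairs, and any $M$ with nontrivial homology in intermediate degrees --- e.g.\ $M=(S^2\times S^2)\setminus \mathrm{int}\,B^4$, which has connected boundary $S^3$ and $\dim M=4$ --- must retain handles of intermediate index in every decomposition. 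Moreover the Whitney trick requires ambient dimension at least $5$, whereas the theorem includes $\dim M=4$; your own remark that $k+1=4$ ``probably needs a separate argument'' concedes that the proposed mechanism does not cover the stated range. The motivation for the cancellation is also off: extending over a $j$-handle with $j\le k$ never invokes $\delta^{j-1}$, because volume only counts preimages of $(k+1)$-cells, and since $X$ is $k$-connected one can extend across the core $D^{j}$ (a null-homotopy of a sphere of dimension $j-1\le k-1$) cellularly into $X^{(k)}$ at zero volume. The genuine difficulty, which your sketch simply asserts away with ``the attaching $S^k$ has image volume $O(n)$,'' is to arrange the extension so that the $k$-spheres which must eventually be filled (the attaching spheres of the top-index handles, or equivalently the boundary of a ball deleted from the interior of $M$) have volume controlled by $n$; an extension produced by obstruction theory or by retracting handles to spines gives no such bound. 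This is exactly where the hypotheses enter in the actual argument: connectedness of $\partial M$ and $k\ge 3$ are used to gather the $n$ preimage $k$-balls of $\alpha$ into a single embedded disc (via a tree of arcs and a controlled homotopy supported in a collar), so that the filling problem is concentrated on one admissible $k$-sphere of volume at most $n$, which is then filled at cost $\delta^{k}(n)$.

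The disconnected case is also not handled correctly. A connected $M$ with disconnected boundary cannot in general be ``partitioned into pieces that realize individual boundary components'': any such cutting introduces new boundary hypersurfaces and does not reduce to instances of the connected case. In the intended argument one instead performs the sphere-reduction in a collar of each boundary component separately, producing several admissible $k$-spheres of volumes $n_1,\dots,n_s$ inside the same manifold $M$, fills each at cost $\delta^{k}(n_i)$, and only then uses superadditivity to conclude $\sum_i\delta^{k}(n_i)\le\delta^{k}(n)$; superadditivity compensates for the fact that $\delta^{k}$ is evaluated at several arguments, not for a decomposition of $M$. To repair your write-up you would need to (i) drop the cancellation step, keep an arbitrary handle decomposition, and show that all handles of index at most $k$ can be crossed at zero volume while preserving admissibility and without increasing the number of $k$-cell preimages on the upper boundary, and (ii) supply the cell-gathering argument that bounds the volume of the spheres bounding the top handles --- which is the actual content of \cite[Rem.~2.6(4)]{BradyEtAl}.
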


\begin{cor}\label{cor:nonrecursive}
  For the group $G$ in Lemma \ref{lem:nonRecFS}, $\FV^{k+1}_{G^{k},\Z}(n)$ is not
  subrecursive for $k\ge 1$ and $\delta^{k}_{G^{k}}(n)$ is not
  subrecursive for $k\ge 3$.
\end{cor}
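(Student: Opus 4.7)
The plan is to chain the lower bound of Theorem~\ref{thm:mainthm} with the standard inequalities between filling functions, using the geometric observation that the $k$-torus $(S^1)^k$ bounds the compact orientable $(k+1)$-manifold $M=D^2\times(S^1)^{k-1}$, whose boundary is connected.

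First I will check that $G^k$ is $\FP^{k+1}$ so that Theorem~\ref{thm:mainthm} applies. The Collins--Miller group $G$ comes with a finite aspherical presentation, so its presentation $2$-complex is a $K(G,1)$ of dimension $2$; the $k$-fold Cartesian power is then a finite $K(G^k,1)$ of dimension $2k$, and in particular $G^k$ is $\FP^{k+1}$. Theorem~\ref{thm:mainthm} then yields
$$\FV^{(S^1)^k}_{G^k,\Z}(n)\succeq \FS_G(n^{1/k}).$$
Since $\FS_G$ is not subrecursive by Lemma~\ref{lem:nonRecFS}, the composition $\FS_G(n^{1/k})$ is also not subrecursive: a recursive bound $r(n)$ on $\FS_G(n^{1/k})$ would give a recursive bound $r(m^k)$ on $\FS_G(m)$. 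Hence $\FV^{(S^1)^k}_{G^k,\Z}(n)$ is not subrecursive.

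For the first statement of the corollary, I apply the inequality $\FV^{\partial M}_{X,\Z}(n)\preceq \FV^{k+1}_{X,\Z}(n)$ recorded earlier (valid since $\dim M=k+1$) with $M=D^2\times(S^1)^{k-1}$ and $X$ the $(k+1)$-skeleton of the universal cover of a finite $K(G^k,1)$; this gives $\FV^{k+1}_{G^k,\Z}(n)\succeq \FV^{(S^1)^k}_{G^k,\Z}(n)$, which is not subrecursive for every $k\ge 1$. For the second statement, when $k\ge 3$ we have $\dim M=k+1\ge 4$ and $\partial M=(S^1)^k$ connected, so Theorem~\ref{BradyEtAlThm} yields $\delta^M_{G^k}(n)\preceq \delta^k_{G^k}(n)$; combined with the inequality $\FV^{\partial M}_{X,\Z}(n)\preceq \delta^M_X(n)$, this gives the chain
$$\FS_G(n^{1/k})\preceq \FV^{(S^1)^k}_{G^k,\Z}(n)\preceq \delta^M_{G^k}(n)\preceq \delta^k_{G^k}(n),$$
so $\delta^k_{G^k}$ is not subrecursive for $k\ge 3$. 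I do not foresee a genuine obstacle; the only things to watch are the choice of bounding manifold $M$ and the dimension restriction $k+1\ge 4$ needed to invoke Theorem~\ref{BradyEtAlThm}, which is exactly why the second conclusion requires $k\ge 3$.
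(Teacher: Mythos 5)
Your argument is correct and is essentially the paper's intended one (the paper leaves the corollary unproved, but the surrounding text points to exactly this chain: Theorem~\ref{thm:mainthm} plus Lemma~\ref{lem:nonRecFS}, the recorded inequalities $\FV^{\partial M}_{X,\Z}\preceq \delta^M_X$ and $\FV^{\partial M}_{X,\K}\preceq\FV^{k+1}_{X,\K}$, and Theorem~\ref{BradyEtAlThm} applied to a manifold bounded by the $k$-torus, for which $M=D^2\times(S^1)^{k-1}$ is the natural choice). Your checks of the $\FP^{k+1}$ hypothesis, the connectedness of $\partial M$, the dimension restriction $k+1\ge 4$, and the preservation of non-subrecursiveness under $\preceq$ and under the substitution $n\mapsto n^{k}$ are all in order.
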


Note that the spheres with large filling volumes may be extremely
distorted.  If $\alpha:(S^1)^{k} \to X^{k}$ is as in the proof of
Theorem \ref{thm:mainthm}, then the construction in the proof of
Theorem \ref{BradyEtAlThm} results in a map $\alpha'$ whose image
contains non-recursively large 2-discs filling curves in the image of
$\alpha$.  These discs do not add to the $k$-volume of
$\alpha'$, but they increase its ``complexity''; for instance, the
Lipschitz constants of $\alpha$ and the number of simplices in a
simplicial approximation grow non-recursively with $\vol{\alpha}$.  Brady et
al.'s theorem suggests that considering fillings of spheres is not
particularly restrictive in high dimensions, since low-volume,
high-complexity spheres can be used to approximate arbitrary
manifolds.  To study differences in filling different manifolds, it
may be worthwhile to study other filling functions.

Finally, a theorem of Papasoglu \cite{Papasoglu} states that
$\delta^2_G$ is a subrecursive function for any group $G$ which is
$\FP^3$.  Combining this with Corollary \ref{cor:nonrecursive}, we
obtain the following:
\begin{cor}
  There is a group $G$ such that $\FV^3_{G,\Z}(n)\nsim \delta^2_G(n)$.
\end{cor}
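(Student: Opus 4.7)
The plan is to take the group $G_0$ from Lemma \ref{lem:nonRecFS} (Collins--Miller's aspherical group with unsolvable word problem) and then let the desired group be $G := G_0^2$, i.e., the direct square. The two pieces needed, both already established or cited in the paper, are an upper bound on $\delta^2_G$ and a lower bound on $\FV^3_{G,\Z}$, which I now describe.

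First, I would verify $G = G_0^2$ is $\FP^3$. Since $G_0$ admits a finite aspherical presentation, it has a finite $K(G_0,1)$; taking products of these finite classifying spaces gives a finite $K(G_0^2,1)$, so $G$ has a $K(G,1)$ with finite skeleton in every dimension, which certainly makes it $\FP^3$. Papasoglu's theorem, quoted in the paragraph preceding the corollary, then immediately gives that $\delta^2_G(n)$ is subrecursive.

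Second, I would invoke Corollary \ref{cor:nonrecursive} applied to $G_0$ with $k=2$: that corollary states $\FV^{k+1}_{G_0^k,\Z}(n)$ is not subrecursive for $k\ge 1$, and with $k=2$ this reads $\FV^3_{G_0^2,\Z}(n) = \FV^3_{G,\Z}(n)$ is not subrecursive.

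Combining the two, $\FV^3_{G,\Z}$ grows faster than any recursive function while $\delta^2_G$ is bounded by one, so the two cannot be related by the equivalence in \eqref{DehnEquivalenceRelation}: any $\preceq$-comparison $\FV^3_{G,\Z}(n) \le A\,\delta^2_G(Bn+C) + Dn + E$ would force $\FV^3_{G,\Z}$ to be subrecursive as well. Hence $\FV^3_{G,\Z}(n) \nsim \delta^2_G(n)$. No step here is genuinely hard — the real content was carried in Theorem \ref{thm:mainthm}, Lemma \ref{lem:nonRecFS}, and Papasoglu's and Brady et al.'s theorems; the only slightly non-trivial verification is the preservation of $\FP^3$ under direct product, which is routine.
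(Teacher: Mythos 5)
Your proposal is correct and is essentially the paper's own argument: the paper likewise takes the square of the Collins--Miller group, applies Corollary \ref{cor:nonrecursive} with $k=2$ to get that $\FV^3$ is not subrecursive, and plays this against Papasoglu's theorem that $\delta^2$ is subrecursive for $\FP^3$ groups. Your extra remark that $\FP^3$ is preserved by taking the product of the finite aspherical $2$-complexes is a correct (and routine) filling-in of a detail the paper leaves implicit.
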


There are also examples of such groups with solvable word problem.  To
construct one such example, we let $G=\BS(1,2)$ and consider $G\times
G$.  One method to show that $\delta_{G}(n)\succeq 2^n$ uses the
asphericity of a certain 2-dimensional $K(G,1)$ to show that discs
filling certain curves must contain an exponentially large number of
2-cells \cite[7.4]{ECHLPT}.  This method also shows that
$\FS_{G}(n)\succeq 2^n$ and thus, by Theorem \ref{thm:mainthm},
$$\FV^{3}_{G\times G,\Z}(n)\succeq 2^{\sqrt{n}}.$$
On the other hand, since there is a 2-dimensional $K(G,1)$, 
$$\delta^{2}_{G}(n)\preceq n,$$
and by Theorem 5.3 of \cite{AlBoBuPrWa2nd},
$$\delta^{2}_{G\times G}(n)\preceq n^2.$$

In these groups, filling a torus takes substantially more volume than
filling a sphere largely because the fundamental group of the torus is
nontrivial.  It would be interesting to see if there are other ways
that the topology of the boundary affects the difficulty of filling.
In particular, it remains open to find examples of groups in which
filling a genus $g$ surface is harder than filling a torus.

The author would like to thank Hanna Bennett, Max Forester, and the
referee for their comments on drafts of this paper, and NYU for its
hospitality during part of the preparation of this paper.

\providecommand{\bysame}{\leavevmode\hbox to3em{\hrulefill}\thinspace}
\providecommand{\href}[2]{#2}

\end{document}